\newtheorem{theorem}{Theorem}[section]
\newtheorem{corollary}[theorem]{Corollary}
\newtheorem{lemma}[theorem]{Lemma}
\newtheorem*{fact}{Fact}
\newcommand{\ground}[1]{\underline{#1}}
\let\subset\subseteq
\let\subobj\subset
\let\isom\cong
\newcommand{\INTS}{\mathbb N}
\newcommand{\Q}{\mathbb Q}
\def\one{\mathbf{1}}
\title{\bf Ramsey Properties of Permutations}
\author{Julia B\"{o}ttcher%
\thanks{%
Current address:
Department of Mathematics,
Columbia House,
London School of Economics,
Houghton Street,
London WC2A 2AE, UK.
\tt j.boettcher@lse.ac.uk}\\
\small Departamento de Ciência da Computação\\[-0.8ex]
\small Instituto de Matemática e Estatística\\[-0.8ex]
\small Universidade de São Paulo\\[-0.8ex]
\small Rua do Matão~1010\\[-0.8ex]
\small 05508--090~São Paulo, Brazil
\and
Jan Foniok%
\thanks{%
Current address:
Department of Mathematics and Statistics,
Queen's University,
Jeffery Hall,
48 University Avenue,
Kingston ON K7L 3N6,
Canada.
\tt foniok@mast.queensu.ca}\\
\small Laboratoire d'Informatique (LIX)\\[-0.8ex]
\small CNRS UMR 7161\\[-0.8ex]
\small Ecole Polytechnique\\[-0.8ex]
\small Route de Saclay\\[-0.8ex]
\small 91128~Palaiseau, France
}
\date{December 21st, 2012}
\begin{document}

\maketitle

\begin{abstract}
The age of each countable homogeneous permutation forms a Ramsey class.
Thus, there are five countably infinite Ramsey classes of permutations.

\bigskip
\noindent
\textbf{Keywords:} Ramsey class, permutation, homogeneous structure
\par\noindent
\textbf{2010 MSC:} 05D10, 05C55, 03C52
\end{abstract}

\section{Introduction}

  The finite Ramsey theorem~\cite{Ram:On-a-problem} states that for
  any natural numbers $a$, $b$ and $r$ there is a natural number
  $c$ such that in any $r$-colouring of all $a$-element subsets of
  the set $\{1,\dotsc,c\}$ there is a monochromatic $b$-element
  subset. This property is usually denoted by the \emph{Erd\H
  os--Rado partition arrow}: $c\to(b)^a_r$. In the following, the
  least such number~$c$ will be denoted by $R(a,b,r)$.

  Different variants and generalisations of this result (cf.\
  Ne\v{s}et\v{r}il~\cite{NesetrilHandbookChapter}) have been
  investigated. The concept of Ramsey classes is perhaps the most
  general of these. A \emph{Ramsey class} is a class~$\mathcal{K}$
  of objects such that for each natural number $r$ and each choice
  of objects $A,B \in \mathcal{K}$ there is an object $C \in
  \mathcal{K}$ such that in any $r$-colouring of all subobjects
  of~$C$ isomorphic to~$A$, there is a monochromatic subobject
  isomorphic to~$B$: \[C\to (B)^A_r. \]

  We consider classes of finite relational structures, with embeddings as
  subobjects (for de\-fi\-nitions see Section~\ref{sec:background}). In
  this case, there is a strong connection between Ramsey classes and
  homogeneous structures.

  \begin{theorem}[cf.\ Ne\v{s}et\v{r}il~\cite{Nes:RamseyHom}]
  \label{thm:Ramsey-hom}
    If $\mathcal{K}$ is a hereditary isomorphism-closed Ramsey class
    with the joint embedding property, then $\mathcal{K}$~is the
    age of a countable homogeneous structure.
  \end{theorem}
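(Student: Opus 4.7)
My plan is to reduce to Fra\"{\i}ss\'{e}'s theorem, which asserts that a class of finite relational structures that is closed under isomorphism, hereditary, contains only countably many isomorphism types, and satisfies both the joint embedding property (JEP) and the amalgamation property (AP) is precisely the age of a unique countable homogeneous structure.

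Closure under isomorphism, hereditariness, and JEP are among the hypotheses of the theorem, and the countability of isomorphism types is automatic for a class of finite structures over a countable signature. The only substantive work is therefore to derive the amalgamation property from the Ramsey property together with the remaining assumptions.

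To establish AP, given $A,B_1,B_2\in\mathcal{K}$ with embeddings $\alpha_i\colon A\hookrightarrow B_i$, I would first invoke JEP to obtain $E\in\mathcal{K}$ with embeddings $\epsilon_i\colon B_i\hookrightarrow E$. If the composites $\epsilon_1\alpha_1$ and $\epsilon_2\alpha_2$ agree as maps $A\to E$, then $E$ itself is an amalgam. Otherwise, apply the Ramsey property to produce $C\in\mathcal{K}$ with $C\to(E)^A_r$ for a suitable number of colours $r$, and colour the copies of $A$ in $C$ in a way that encodes the distinct ``roles'' a copy can play inside a copy of $E$. A monochromatic copy of $E$ in $C$ then forces the images of $\epsilon_1\alpha_1$ and $\epsilon_2\alpha_2$ inside that copy to coincide; the substructure of $C$ generated by the corresponding images of $B_1$ and $B_2$ is an amalgam of the diagram over $A$, and lies in $\mathcal{K}$ by hereditariness.

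The main obstacle is choosing the colouring correctly: it has to be defined intrinsically on copies of $A$ in $C$ (without reference to a particular copy of $E$), yet be sensitive enough that monochromaticity collapses the pair $\epsilon_1\alpha_1,\epsilon_2\alpha_2$ into a single embedding. This is the central combinatorial point in Ne\v{s}et\v{r}il's argument; once AP has been established in this way, the direct-limit construction of Fra\"{\i}ss\'{e} produces the countable homogeneous structure whose age is exactly $\mathcal{K}$.
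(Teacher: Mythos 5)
The paper itself gives no proof of this theorem --- it is quoted from Ne\v{s}et\v{r}il~\cite{Nes:RamseyHom} --- so your attempt can only be measured against the standard argument, and your overall reduction is exactly the right one: verify the hypotheses of Fra\"{\i}ss\'{e}'s theorem (Theorem~\ref{thm:Fraisse}), where the only non-trivial point is deriving the amalgamation property from the Ramsey property together with JEP and hereditariness. The problem is that the one step you defer --- ``choosing the colouring correctly'' --- is the entire content of the proof, and the mechanism you sketch for it cannot work as described. A monochromatic copy of $E$ cannot ``force the images of $\epsilon_1\alpha_1$ and $\epsilon_2\alpha_2$ inside that copy to coincide'': within any copy of $E$ those two embeddings are already determined by $E$ itself, so if their images differ in $E$ they differ in every copy, regardless of the colouring. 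The goal is not to collapse the two given embeddings but to produce a \emph{new} embedding of $B_1$ agreeing with the given embedding of $A$ through $B_2$.

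The colouring that achieves this uses only $r=2$ colours and is defined intrinsically, as you require. Given $A,B_1,B_2$ with $\alpha_i\colon A\to B_i$, use JEP to get $E$ containing copies of $B_1$ and $B_2$, and take $C\in\mathcal{K}$ with $C\to(E)^A_2$. Colour $f\in\binom{C}{A}$ with colour $1$ if there exists $g\colon B_1\to C$ with $g\circ\alpha_1=f$, and with colour $2$ otherwise. Let $E'$ be a monochromatic copy of $E$, with induced embeddings $\epsilon_1'\colon B_1\to C$ and $\epsilon_2'\colon B_2\to C$. Then $\epsilon_1'\circ\alpha_1$ has colour $1$ by construction, so monochromaticity forces $\epsilon_2'\circ\alpha_2$ to have colour $1$ as well; that is, there is $g_1\colon B_1\to C$ with $g_1\circ\alpha_1=\epsilon_2'\circ\alpha_2$. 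Taking $g_2=\epsilon_2'$ exhibits $C$ (which already lies in $\mathcal{K}$) as an amalgam of $B_1$ and $B_2$ over $A$. The remaining Fra\"{\i}ss\'{e} hypotheses hold as you say --- countability of isomorphism types is automatic for finite structures over the fixed signature $\sigma$ --- so once this step is supplied the proof is complete. In short: correct reduction, but the key combinatorial step is missing and your proposed route to it would fail.
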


  For investigating Ramsey classes it therefore suffices to consider
  classes of relational structures that are the age of some homogeneous
  structure. A classification programme of Ramsey classes was
  launched by Nešetřil~\cite{Nes:RamseyHom}, see also~\cite{HubNes:Finite}.
  Ramsey classes of graphs had been characterised by
  Nešetřil~\cite{Nesetril89}, and characterisations for tournaments
  and posets appear in~\cite{Nes:RamseyHom}, and for posets again
  in~\cite{Sok:Ramsey-Properties,Sok:Ramsey-Properties-II}.

  In this paper, we make a small contribution to this classification
  programme and determine all Ramsey classes of finite permutations.
  We establish the following result.

  \begin{theorem}
  \label{thm:main}
    The age of each countable homogeneous permutation forms a Ramsey class. 
  \end{theorem}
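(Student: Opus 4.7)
The plan is to invoke Cameron's classification of the countable homogeneous permutations, which identifies exactly five such structures up to isomorphism, and then to verify the Ramsey property for the age of each one individually. Informally, these five ages can be grouped as follows: two ``trivial'' ages in which one of the two linear orders determines the other (the identity permutation and its reverse), two intermediate ages arising from lexicographic combinations of the trivial ones, and the age of \emph{all} finite permutations, i.e.\ the Fra\"{\i}ss\'e limit case.

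For the two trivial ages the elements are essentially finite chains, so the Ramsey property reduces immediately to the classical finite Ramsey theorem applied to subsets of a linearly ordered set.

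For the two intermediate ages I would exploit the block decomposition that each admissible permutation inherits from its homogeneous parent: the ``coarse'' between-blocks pattern is of one trivial type while the ``fine'' within-blocks pattern is of the other. Given objects $A$ and $B$ in such an age, a witness $C$ can be assembled by applying the trivial case twice, once to force a monochromatic copy of the coarse pattern and once for the fine pattern, and then combining the two by a product/composition argument. This is analogous to the standard composition of Ramsey objects and should go through with routine book-keeping.

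The main obstacle is the generic case, in which the age is the class of all finite permutations. Here I would view a finite permutation as an ordered relational structure, using the first linear order as the canonical ordering, equipped with one additional binary relation, namely the second linear order. The Ramsey property of such ordered classes is amenable to the partite construction method of Ne\v{s}et\v{r}il and R\"odl. The essential difficulty is to arrange that the amalgamations performed during the partite construction preserve the property that the additional binary relation of the resulting structure is again a linear order; this is the technical heart of the argument, and should be achievable by carefully extending the second order at each amalgamation step, in the spirit of existing Ramsey results for classes equipped with two linear orders.
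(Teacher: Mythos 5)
Your overall route --- Cameron's classification followed by a case analysis, with the finite Ramsey theorem for the two rigid classes, a product-type argument for the two lexicographic classes, and the Ne\v{s}et\v{r}il--R\"odl partite construction for the class of all permutations --- is exactly the paper's. However, in both nontrivial cases your sketch passes over the step that carries the actual content. For the intermediate classes, ``apply the trivial case twice and combine'' does not typecheck as stated: a copy of $A$ in a candidate $C$ is determined \emph{jointly} by a choice of $t(A)$ blocks of $C$ and a choice of $a_j$ elements inside the $j$th chosen block (where the $a_j$ may differ from block to block), so the colouring does not factor into a coarse colouring and a fine colouring that could be handled by two independent applications of Ramsey's theorem. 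What is needed is the product Ramsey theorem, iterated over all $\binom{t(C)}{t(A)}$ block-tuples while shrinking the blocks, so that the colour of a copy of $A$ first becomes a function of its set of blocks alone; only then can the ordinary Ramsey theorem be applied to the blocks. This is the content of Theorem~\ref{thm:PR:modified}, and it is more than routine book-keeping --- in particular one needs a version of the product Ramsey theorem in which the within-block sizes of $A$ vary with the block, which is not its literal statement.

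More seriously, in the generic case you locate the difficulty in the wrong place. Amalgamation is the easy half: permutations amalgamate strongly (interleave the two orders consistently), and this passes to $a$-partite permutations with essentially no work (Lemma~\ref{lem:perm:partiteAmalgam} is three lines). What the partite construction actually requires, and what your proposal never mentions, is a \emph{partite lemma}: given an $a$-partite transversal $A$ and an arbitrary $a$-partite permutation $B$, produce an $a$-partite permutation $C$ with $C\to(B)^A_r$. This cannot be imported from Ne\v{s}et\v{r}il--R\"odl --- their Hales--Jewett-based partite lemma is for set systems and does not directly apply to structures required to carry two linear orders --- and it is where the paper does its real work, by embedding every $a$-partite permutation into a ``snake'' and inducting on the number of parts. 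Without this lemma the partite construction has nothing to iterate, so as it stands your argument for the class of all permutations has a genuine hole.
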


  For the proof, we use a characterisation of homogeneous permutations by
  Cameron~\cite{Cam:Perm} (see Theorem~\ref{thm:homPermutations}). For the
  respective ages, we apply the finite Ramsey theorem, the product Ramsey
  theorem, and the partite construction and amalgamation technique of
  Nešetřil and Rödl~\cite{NesRod:Simple,NesRod:Two-proofs} in order to
  establish Theorem~\ref{thm:main} (see Sections~\ref{sec:product}
  and~\ref{sec:universal}).

\section{Background: Homogeneous structures and Ramsey classes}
\label{sec:background}
 
  According to Theorem~\ref{thm:Ramsey-hom} homogeneous relational
  structures and Ramsey classes are closely connected.  In this section we
  provide the necessary formal definitions.

  \smallskip

  We consider relational structures with a fixed signature~$\sigma$.
  The domain of a structure~$A$ is denoted by~$\ground{A}$. An
  injective mapping $f:\ground{A} \to \ground{B}$ is an \emph{embedding}
  of $A$ into $B$ if  for every $k$-ary relation symbol $R$ in~$\sigma$
  and any $k$-tuple $(x_j:j=1,\dotsc,k)$ we have $(x_j:j=1,\dots,k)
  \in R^A$ iff $(f(x_j):j=1,\dots,k) \in R^{B}$. $A$ is a
  \emph{substructure} of $B$ if the inclusion mapping is an embedding
  of $A$ into $B$; we write $A\subseteq B$. Moreover, $\binom{B}{A}$
  stands for the set of all embeddings of~$A$ into~$B$.

  Consider a class $\mathcal{K}$ of relational structures that is closed
  under isomorphisms.  For structures $A,B,C \in \mathcal{K}$ and $r \in \INTS$ the
  \emph{Erd\H{o}s-Rado partition arrow} $C \rightarrow (B)^A_r$ denotes the
  following: For each partition $\binom{C}{A}=\mathcal{A}_1\cup\dots\cup
  \mathcal{A}_r$ there exist $i\in\{1,\dots,r\}$ and $B' \subobj C$ such
  that $B' \isom B$ and $\binom{B'}{A} \subset \mathcal{A}_i$. We also call
  $B'$ a \emph{monochromatic} copy of $B$ in $C$ and the partition of
  $\binom{C}{A}$ a \emph{colouring}.  For $A \in \mathcal{K}$ we say that
  $\mathcal{K}$ is \emph{$A$-Ramsey} if for every $B\in\mathcal{K}$ and $r
  \in \INTS$ there exists a $C \in \mathcal{K}$ such that $C \rightarrow
  (B)^A_r$.  $\mathcal{K}$ is a \emph{Ramsey class} if it is $A$-Ramsey for
  every $A \in \mathcal{K}$.

  A relational structure $B$ is called \emph{homogeneous} if every
  isomorphism $f:A \to A'$ between finite substructures $A$ and
  $A'$ of $B$ can be extended to an automorphism of $B$.  For a
  structure~$B$, the \emph{age} of $B$ is
  the class of all finite structures that can be embedded into~$B$.

  Let $\mathcal{K}$ be a class of relational structures. Then
  $\mathcal{K}$ is \emph{hereditary} if $A' \in \mathcal{K}$ and
  $A \subobj A'$ imply $A \in \mathcal{K}$.
  We say that $\mathcal{K}$ has the \emph{joint embedding property}
  if for any $B,B'\in\mathcal{K}$ there is $C\in \mathcal{K}$ such
  that both $B$ and $B'$ are embeddable in~$C$.
  We say that $\mathcal{K}$ has the \emph{amalgamation property}
  if for every $A,B,B'\in\mathcal{K}$ and every embedding $f:A\to
  B$ of $A$ into $B$ and every embedding $f':A\to B'$ of $A$ into
  $B'$ there exists $C\in\mathcal{K}$ and embeddings $g:B\to C$ and
  $g':B'\to C$ such that $g \circ f = g'\circ f'$.
  If the intersection of $g(\ground{B})$ and $g'(\ground{B'})$ is
  equal to $g \circ f(\ground{A})$, we say that the amalgamation (or
  the joint embedding) is \emph{strong}.

  \smallskip

  Because of Theorem~\ref{thm:Ramsey-hom} we are interested in ages of
  countable homogeneous relational structures.  A classical result of
  Fra\"iss\'e gives a necessary and sufficient condition for a class of
  finite structures to be the age of such a structure.

  \begin{theorem}[Fra\"iss\'e~\cite{Fraisse86}]\label{thm:Fraisse}
    A class $\mathcal{K}$ of finite relational structures is the age of
    a countable homogeneous relational structure $\mathcal{U}$ iff the 
    following five conditions hold:
    \begin{compactenum}
      \item $\mathcal{K}$ is closed under isomorphism,
      \item $\mathcal{K}$ has only countably many isomorphism classes,
      \item $\mathcal{K}$ is hereditary,
      \item $\mathcal{K}$ has the joint embedding property,
      \item $\mathcal{K}$ has the amalgamation property.
    \end{compactenum}
  \end{theorem}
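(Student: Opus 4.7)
The plan is to prove both directions separately, the forward direction being short and the reverse requiring the Fra\"iss\'e limit construction.

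\smallskip
\emph{Forward direction.} Assume $\mathcal{U}$ is countable homogeneous and $\mathcal{K}=\age(\mathcal{U})$. Conditions (1) and (3) are immediate from the definition of age, and (2) holds because $\mathcal{U}$ has only countably many finite substructures. For (4), given $B,B'\in\mathcal{K}$, I fix embeddings into $\mathcal{U}$ and let $C$ be the finite substructure of $\mathcal{U}$ induced on the union of their images. For (5), given $A,B,B'\in\mathcal{K}$ and embeddings $f\colon A\to B$ and $f'\colon A\to B'$, I embed $B$ and $B'$ into $\mathcal{U}$ via some $h$ and $h'$; then $h\circ f$ and $h'\circ f'$ provide two embeddings of $A$ into $\mathcal{U}$, whose images induce isomorphic substructures. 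Homogeneity supplies an automorphism of $\mathcal{U}$ carrying one to the other, after which the finite substructure of $\mathcal{U}$ on the union of the adjusted images serves as $C$.

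\smallskip
\emph{Reverse direction.} I build $\mathcal{U}$ as the union of a chain $U_0\subseteq U_1\subseteq U_2\subseteq\dotsb$ in $\mathcal{K}$. Using (2) I fix enumerations of the isomorphism types in $\mathcal{K}$ and of ``extension tasks'' of the form $(D,D\subseteq E)$ with $D,E\in\mathcal{K}$, interleaved by a standard pairing so that each task recurs cofinally. At odd stages I invoke the joint embedding property~(4) to guarantee that every isomorphism type in $\mathcal{K}$ eventually embeds into some $U_i$. At even stages I invoke the amalgamation property~(5): if the current task asks that an embedded copy of $D$ inside $U_i$ be extended to a copy of $E$, I amalgamate $U_i$ and $E$ over $D$ to obtain $U_{i+1}\in\mathcal{K}$.

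\smallskip
Let $\mathcal{U}=\bigcup_i U_i$. It is countable, its age is exactly $\mathcal{K}$ (one direction from the construction, the other from~(1) and~(3)), and it is homogeneous by a back-and-forth argument: given isomorphic finite $A,A'\subseteq\mathcal{U}$, I alternately pick the next unused point of $\mathcal{U}$ (in the domain, then in the range), at each step producing a one-point extension of $A$ in $\mathcal{K}$ that my extension-task enumeration has already arranged to realise inside some $U_j$; this lets me extend the isomorphism to the new point, and the limit is the required automorphism.

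\smallskip
\emph{Main obstacle.} The technical core is the bookkeeping that couples the enumeration of extension tasks with the back-and-forth: I must ensure that every finite substructure appearing in $\mathcal{U}$ at some finite stage has all of its one-point extensions in $\mathcal{K}$ realised above it, and simultaneously that both the forward and the backward moves of the back-and-forth find a witness inside the chain. Once this scheduling is laid out carefully, homogeneity of $\mathcal{U}$ reduces to a routine limit argument, and the equality $\age(\mathcal{U})=\mathcal{K}$ is immediate.
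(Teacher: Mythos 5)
The paper does not prove this statement at all: Theorem~\ref{thm:Fraisse} is quoted as a classical result with a citation to Fra\"iss\'e's book, so there is no in-paper argument to compare yours against. Your outline is the standard textbook proof and is sound. The forward direction is complete as written; the one point worth making explicit is that in the amalgamation step homogeneity is applied to the isomorphism $h'\circ f'\circ(h\circ f)^{-1}$ between the two finite images of $A$ in $\mathcal{U}$, and that the resulting automorphism $\alpha$ yields $\alpha\circ h$ and $h'$ as the required pair $g,g'$. In the reverse direction you have correctly identified where the real work lies: the chain must be scheduled so that, for \emph{every} embedded copy of every $D\in\mathcal{K}$ appearing at \emph{any} finite stage and every one-point extension $D\subseteq E$ in $\mathcal{K}$, the task is eventually serviced (this is where condition~(2) is used to keep the task set countable, and condition~(5) to service each task while preserving the earlier $U_i$ as a substructure, possibly after replacing the amalgam by an isomorphic copy). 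Granting that bookkeeping, the resulting extension property of $\mathcal{U}$ drives the back-and-forth exactly as you describe, and $\age(\mathcal{U})=\mathcal{K}$ follows from heredity and the joint-embedding stages. So the proposal is correct as a sketch; nothing in it would fail, but the scheduling argument and the verification of the extension property are asserted rather than carried out, which is acceptable for a result the paper itself only cites.
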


  For a class $\mathcal{K}$ that satisfies the conditions 1.--5.
  of Theorem~\ref{thm:Fraisse}, the countable homogeneous
  structure~$\mathcal{U}$ is unique up to isomorphism and is called
  the \emph{Fraïssé limit} of~$\mathcal{K}$.

\section{Permutations}

  From now on we will be concerned with the proof of
  Theorem~\ref{thm:main}.  We first need to introduce some
  notation. We adopt the definition of permutation following
  Cameron~\cite{Cam:Perm}: as a relational structure with two linear
  (total) orders. This definition applies to both finite and infinite
  permutations. Its advantage over viewing a permutation as a bijective
  mapping of a set to itself is that it makes the notion of subpermutation
  clearer.

  Formally, a \emph{permutation} $P=(X,{<_0},{<_1})$ consists of a set~$X$
  and two linear orders~$<_0$ and~$<_1$ of~$X$ where~$<_0$ is called the
  \emph{natural order} on~$X$ and~$<_1$ is arbitrary. An example is shown
  in Figure~\ref{fig:permutation}.  A \emph{subpermutation} of~$P$ is a
  permutation $P'=(X',{<_0'},{<_1'})$ with $X'\subseteq X$ such that~$<_i'$
  is the restriction of~$<_i$ to~$X'$.  Let $P=(X,<_0,<_1)$ be a finite
  permutation with $|X|=x$ and consider a sequence $(p_i:i=1,\dots,x)$
  of pairwise distinct numbers $p_i\in\{1,\dots,x\}$.  Then we call $(p_i:i=1,\dots,x)$
  the \emph{pattern} of $P$ if~$P$ is isomorphic to the permutation
  $P'=(\{1,\dots,x\},{<},{<_1'})$ with $i<j$ iff $p_i <_1' p_j$ for
  $i,j\in\{1,\dots,x\}$.

  \begin{figure}[ht]
     \psfrag{1}{$1$} 
     \psfrag{2}{$2$} 
     \psfrag{3}{$3$} 
     \psfrag{4}{$4$}
     \psfrag{5}{$5$}
     \psfrag{6}{$6$}
   \begin{center}
     \includegraphics[scale=.8]{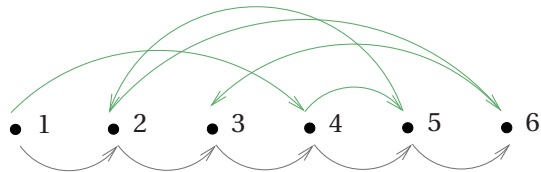}
   \end{center}
  \caption{The permutation with pattern $1,4,5,2,6,3$ (transitive arcs are omitted).}
  \label{fig:permutation}
  \end{figure}

  The countable homogeneous permutations were characterised by Cameron~\cite{Cam:Perm}.
  Let $\one$ be the unique permutation on one point. An \emph{identity permutation}
  is a permutation of the form $(X,{<_0},{<_0})$ and a \emph{reversal} is a
  permutation of the form $(X,{<_0},{>_0})$.
  An \emph{increasing sequence of decreasing sequences} is a permutation that contains
  no subpermutation with pattern $2,3,1$ and no subpermutation with pattern $3,1,2$.
  \emph{Decreasing sequences of increasing sequences} are defined analogously 
  (no $2,1,3$ and no $1,3,2$).

  \begin{theorem}[Cameron~\cite{Cam:Perm}]\label{thm:homPermutations}
    The age of each \emph{countable homogeneous permutation} is one 
    of the following classes of finite permutations:
    \begin{compactenum}
      \item $\{\one\}$,
      \item the class of identity permutations,
      \item the class of reversals,
      \item increasing sequences of decreasing sequences, 
      \item decreasing sequences of increasing sequences ,
      \item the class of all permutations.
    \end{compactenum}
  \end{theorem}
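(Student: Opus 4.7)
The plan is to combine Fraïssé's theorem with amalgamation arguments on small permutations. By Theorem~\ref{thm:Fraisse}, classifying countable homogeneous permutations is equivalent to classifying the hereditary amalgamation classes of finite permutations (viewed as structures in the signature of two binary relations). Closure under isomorphism and countability of the isomorphism classes are automatic for permutations, so only heredity and amalgamation need to be controlled. Moreover, every hereditary class of finite permutations is determined by its antichain of minimal omitted patterns, so the task reduces to identifying exactly which such antichains yield amalgamation classes.

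The strategy is a case analysis beginning with the two two-element patterns. If both $12$ and $21$ are omitted, then $\mathcal{K}=\{\one\}$, giving class~1. If $\mathcal{K}$ contains $12$ but not $21$, then in every member of $\mathcal{K}$ the two orders agree, so $\mathcal{K}$ is contained in the identity permutations; a one-point amalgamation of two identities shows that all identity permutations actually belong to $\mathcal{K}$, giving class~2. The symmetric case gives class~3, the reversals. The remaining case, where $\mathcal{K}$ contains both $12$ and $21$, is the substantive one.

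For that case, carry out the four one-point amalgamations of a $12$ with a $21$ with the common element placed at either extreme in either factor: each such amalgamation yields a three-element permutation that must be one of $\{213,231\}$ or (by a symmetric amalgamation) one of $\{132,312\}$. These constraints force $\mathcal{K}$ to contain at least one member of each of the pairs $\{213,231\}$, $\{213,312\}$, $\{132,231\}$ and $\{132,312\}$. A follow-up case analysis, using further amalgamations of three-point permutations over a two-point substructure, rules out every asymmetric combination and leaves only three possibilities for the set of omitted three-patterns: $\emptyset$ (class~6, all permutations), $\{231,312\}$ (class~4, increasing sequences of decreasing sequences) and $\{213,132\}$ (class~5, decreasing sequences of increasing sequences).

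Finally, for each of the six candidate classes one checks that the amalgamation property actually holds and that no longer pattern is independently forbidden. For class~6, amalgamation is immediate by freely interleaving the added points into both linear orders; for classes~4 and~5 the block structure of a chain-of-chains yields a canonical amalgam. The classical pattern-avoidance fact that a finite permutation avoids both $231$ and $312$ precisely when it is an increasing sequence of decreasing sequences (dually for $213$ and $132$) guarantees that these minimal forbidden sets already determine the classes completely, so no longer pattern can be independently omitted. The main obstacle will be the amalgamation analysis in the ``both $12$ and $21$ present'' case: one has to check enough amalgamation diagrams to eliminate every asymmetric admissible subset of three-patterns, while simultaneously verifying that each of the three surviving configurations extends consistently to an amalgamation class on permutations of all sizes.
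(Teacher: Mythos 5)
You should first note that the paper does not prove this statement at all: Theorem~\ref{thm:homPermutations} is imported verbatim from Cameron's paper \emph{Homogeneous permutations} and is used as a black box, so there is no in-paper proof to compare against. Judged on its own terms, your outline gets the easy parts right: the reduction via Theorem~\ref{thm:Fraisse}, the two-pattern trichotomy, and the observation that the four one-point amalgamations of a $12$ with a $21$ force the set of realised non-monotone three-patterns to be a vertex cover of the four-cycle $213$--$231$--$132$--$312$--$213$, whose minimal covers are $\{213,132\}$ and $\{231,312\}$. But the two steps that carry all the weight are asserted rather than proved, and one of them is false as stated.

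First, you claim that ``amalgamations of three-point permutations over a two-point substructure'' eliminate the asymmetric covers, e.g.\ the case where exactly $231$ is omitted. They cannot. A forced $231$ in a one-point amalgam of $B=A\cup\{u\}$ and $B'=A\cup\{v\}$ must use both $u$ and $v$ plus one point of $A$; running through the cases according to whether the relative position of $u,v$ is free or forced in each order, one finds that whenever the witnesses needed to force a $231$ in the amalgam exist inside a two-element $A$, one of $B,B'$ already contains a $231$ --- so every $3$-over-$2$ diagram inside $\mathrm{Av}(231)$ admits a $231$-avoiding completion. The smallest genuine failure needs four-point factors over a three-point base: take $A$ with pattern $123$ on $w_0<_0 w<_0 w_1$, let $B$ add $u$ with $u<_0 w_0$ and $w<_1 u<_1 w_1$ (pattern $3124$), and let $B'$ add $v$ with $w_0<_0 v<_0 w$ and $w_1<_1 v$ (pattern $1423$); then $u<_0 v$ and $u<_1 v$ are forced through $w_0$ and $w_1$, and $\{u,v,w\}$ is a forced $231$. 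Even then you must first prove that $3124$ and $1423$ themselves lie in the hypothetical class, which is known only through its three-element level --- this cascading is precisely where the work lies.

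Second, the closing claim that ``no longer pattern can be independently omitted'' does not follow from the fact that $\mathrm{Av}(231,312)$ equals the increasing sequences of decreasing sequences. That identity says nothing about whether a hereditary amalgamation class realising all six three-patterns must contain \emph{all} permutations: the separable permutations $\mathrm{Av}(2413,3142)$ realise every three-pattern, are hereditary and have the joint embedding property, so they must be excluded by an explicit amalgamation failure of their own. The natural induction --- amalgamate two $(n-1)$-point subpermutations of a target $\pi$ over their common $(n-2)$-point part --- breaks down already at $n=4$: in $2413$ every pair of points is adjacent in one of the two orders, so no such amalgam is forced to equal $2413$. Until you supply these two arguments (eliminating the asymmetric three-level cases with larger diagrams, and showing that each surviving three-level profile determines the whole class), the proposal is a plausible plan, not a proof.
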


  The corresponding countable homogeneous permutations (Fraïssé
  limits), described by Cameron~\cite{Cam:Perm} as well, are the
  following:
    \begin{compactenum}
      \item $\one$,
      \item $(\Q,<,<)$,
      \item $(\Q,<,>)$,
      \item $(\Q^2,<_0,<_1)$, where $<_0$ is the lexicographic
      ordering and $<_1$~is the lexicographic ordering reversed
      within blocks,
      \item $(\Q^2,<_0,<_1)$, where $<_0$ is the lexicographic
      ordering and $<_1$~is the lexicographic ordering reversed
      between blocks,
      \item $(\Q^2,<_0,<_1)$, where $(x,y)<_0(u,v)$ if $xa+yb<ua+vb$,
      and $(x,y)<_1(u,v)$ if $xc+yd<uc+vd$;
      $a,b,c,d$ are fixed real numbers such that $b/a$ and $d/c$
      are distinct irrationals satisfying $b/a+d/c\ne0$.
    \end{compactenum}

  Our main theorem, Theorem~\ref{thm:main}, claims that each of the
  classes of Theorem~\ref{thm:homPermutations} forms a Ramsey class (and hence, because of
  Theorem~\ref{thm:Ramsey-hom}, these are the only Ramsey classes of
  permutations).  As a direct consequence of the finite Ramsey theorem we
  have the following.

  \begin{fact}
    The class of identity permutations and the class of reversals are Ramsey classes.
  \end{fact}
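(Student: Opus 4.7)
The plan is to reduce the assertion to the finite Ramsey theorem by exploiting the fact that both classes are essentially the class of finite linear orders in disguise. An identity permutation $(X,{<_0},{<_0})$ is determined by the single linear order~$<_0$, and a reversal $(X,{<_0},{>_0})$ is likewise determined by~$<_0$. Consequently, in either class an embedding $A\hookrightarrow B$ is precisely an order-preserving injection with respect to~$<_0$, i.e.\ a choice of an $|A|$-element subset of~$\ground{B}$; in particular $\bigl|\binom{B}{A}\bigr|=\binom{|B|}{|A|}$.

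With this observation in hand, I would proceed as follows. Fix a class $\mathcal{K}$ (identity permutations or reversals), pick $A,B\in\mathcal{K}$ with $a=|\ground{A}|$ and $b=|\ground{B}|$, and let $r\in\INTS$. Set $c=R(a,b,r)$ from the finite Ramsey theorem and let $C\in\mathcal{K}$ be the unique (up to isomorphism) member of $\mathcal{K}$ on $c$ points. An $r$-colouring of $\binom{C}{A}$ is then, by the correspondence above, an $r$-colouring of the $a$-element subsets of a $c$-element set. The finite Ramsey theorem provides a $b$-element subset $S\subset\ground{C}$ all of whose $a$-subsets receive the same colour. The induced subpermutation on~$S$ is isomorphic to~$B$ (again by rigidity of linear orders), and every embedding of $A$ into it is monochromatic, so $C\to(B)^A_r$.

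I expect no significant obstacle: the entire argument is a straightforward translation of the finite Ramsey theorem. The only point worth making carefully is the claim that embeddings coincide with subsets, which rests on the fact that a finite linear order admits a unique order-isomorphism to any other linear order of the same size. Once this is stated, the proof is a single application of the finite Ramsey theorem.
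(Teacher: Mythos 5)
Your proof is correct and is exactly the argument the paper intends: the paper states this Fact without proof, remarking only that it is ``a direct consequence of the finite Ramsey theorem,'' and your identification of embeddings with $a$-element subsets (via the rigidity of finite linear orders) followed by a single application of $R(a,b,r)$ is precisely that direct consequence spelled out.
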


  In the rest of this paper we thus study the remaining three
  classes from Theorem~\ref{thm:homPermutations}.

\section{Increasing and decreasing sequences\\and the product Ramsey theorem}
\label{sec:product}

  First, we investigate the class of increasing sequences of
  decreasing sequences and the class of decreasing sequences of
  increasing sequences. For showing that they are Ramsey we make
  use of the \emph{product Ramsey theorem} of Graham, Rothschild
  and Spencer~\cite[Chapter~5.1]{GrahamRothschildSpencer90}.
  Let~$X$ be a set.
  We write $\binom{X}{k}$ for the set of all $k$-element subsets of~$X$.

  \begin{theorem}[product Ramsey theorem]

    For all $r,t,n\in\INTS$ and $p\in\INTS^t$ there exists $N\in\INTS$
    such that the following holds:
    If
    \begin{compactitem}
      \item $X_1,X_2,\dots,X_t$ are sets such that
      $|X_i| \geq N$ for all $1 \leq i \leq t$, and
      \item we $r$-colour $\binom{X_1}{p_1} \times \dots \times \binom{X_t}{p_t}$,
    \end{compactitem}
     then there are $Y_i \subset X_i$ with $|Y_i|\geq n$ for all $1 \leq i \leq t$ such that
     $\binom{Y_1}{p_1} \times \dots \times \binom{Y_t}{p_t}$
     is monochromatic.
  \end{theorem}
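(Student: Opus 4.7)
The plan is to proceed by induction on $t$, the number of coordinates. The base case $t=1$ is precisely the finite Ramsey theorem recalled in the introduction, so setting $N := R(p_1, n, r)$ suffices.

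For the inductive step, suppose the statement has been established for $t-1$, and we are given $r$, $n$, and $(p_1, \dots, p_t)$. First I would set $m := R(p_t, n, r)$, so that every $r$-colouring of $\binom{Z}{p_t}$ with $|Z|=m$ admits a monochromatic $n$-subset. Since there are exactly $r' := r^{\binom{m}{p_t}}$ distinct $r$-colourings of the $p_t$-subsets of a fixed $m$-element set, I would then invoke the induction hypothesis with parameters $r'$, $n$, and $(p_1, \dots, p_{t-1})$ to obtain an integer $N'$. The bound I propose is $N := \max(N', m)$.

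The crucial move is a dimension reduction. Given an $r$-colouring $\chi$ of $\binom{X_1}{p_1} \times \dots \times \binom{X_t}{p_t}$ with $|X_i| \geq N$, I would fix an arbitrary $Z_t \subset X_t$ of size $m$, and define an auxiliary $r'$-colouring $\chi'$ of $\binom{X_1}{p_1} \times \dots \times \binom{X_{t-1}}{p_{t-1}}$ by letting $\chi'(S_1, \dots, S_{t-1})$ encode the \emph{entire} function $T \mapsto \chi(S_1, \dots, S_{t-1}, T)$ for $T \in \binom{Z_t}{p_t}$. Applying the induction hypothesis to $\chi'$ yields subsets $Y_1, \dots, Y_{t-1}$ with $|Y_i| \geq n$ on which $\chi'$ is constant; equivalently, there is a single $r$-colouring $\tau$ of $\binom{Z_t}{p_t}$ satisfying $\chi(S_1, \dots, S_{t-1}, T) = \tau(T)$ for all $S_i \in \binom{Y_i}{p_i}$ and $T \in \binom{Z_t}{p_t}$. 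A final application of the classical Ramsey theorem to $\tau$ on $Z_t$ produces $Y_t \subset Z_t$ with $|Y_t| \geq n$ on which $\tau$, and therefore $\chi$, is constant.

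I expect the main obstacle to be conceptual rather than technical: the inductive call must be handed a colouring drawn from a \emph{bounded} alphabet whose size does not depend on the $|X_i|$. Restricting the last coordinate to the small auxiliary piece $Z_t$ of size $m$ is exactly what achieves this, at the price of inflating the colour count from $r$ to $r'$. Once this trade-off is correctly set up, the argument reduces to two routine applications of Ramsey's theorem, and the choice $N = \max(N',m)$ is clearly sufficient.
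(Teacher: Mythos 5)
Your argument is correct: the induction on $t$, encoding the restriction of $\chi$ to a fixed $m$-element set $Z_t$ in the last coordinate as a single colour from an alphabet of size $r^{\binom{m}{p_t}}$, is exactly the standard proof of the product Ramsey theorem, and the bookkeeping ($N=\max(N',m)$, the final application of Ramsey's theorem to $\tau$ on $Z_t$) is all in order. Note, however, that the paper does not prove this statement at all --- it is quoted from Graham, Rothschild and Spencer and used as a black box --- so your proof supplies a self-contained argument (essentially the one in that reference) rather than an alternative to anything in the paper.
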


  In the following let $PR(r,p,t,n)$ denote the least integer $N$ such that
  the assertion of this theorem holds for `input' $r,p,t,n$.

  Next, we use the product Ramsey theorem in order to deduce the following
  variant for sets $A \subset \INTS^2$ of ordered pairs, which applies more
  directly to our problem. Two finite sets $A \subset \INTS \times J$ and
  $A' \subset \INTS \times J'$ with $J=\{j_1,\dots,j_n\}\subset \INTS$ and
  $J'=\{j'_1,\dots,j'_{n'}\} \subset \INTS$ where $j_1<\dots<j_n$ and
  $j'_1<\dots<j'_{n'}$ are \emph{isomorphic} if $n=n'$ and
  $|A\cap(\INTS\times\{j_i\})|=|A'\cap(\INTS\times\{j'_i\})|$ for all
  $1\leq i\leq n$.  Moreover, $A'$ is a \emph{substructure} of $B$ if it is a
  subset of $B$, and $\binom{B}{A}$ contains all substructures $A'$ of $B$
  that are isomorphic to~$A$.

  \begin{theorem}\label{thm:PR:modified}
    For all $r,t(A),t(B)\in\INTS$ and all $a\in\INTS^{t(A)}$, $b \in\INTS$ there exist $t(C),c\in\INTS$ such that the
    following holds:
    If
    \begin{align*}
      A &= \big\{(x,j) \colon x\in\{1,\dots,a_j\}, j\in\{1,\dots,t(A)\} \big\}\,, \\
      B &= \big\{(x,j) \colon x\in\{1,\dots,b\}, j\in\{1,\dots,t(B)\} \big\}\,, \\
      C &= \big\{(x,j) \colon x\in\{1,\dots,c\}, j\in\{1,\dots,t(C)\} \big\}\,,
    \end{align*}
    and we $r$-colour $\binom{C}{A}$, then there is a monochromatic copy of $B$ in $C$.
  \end{theorem}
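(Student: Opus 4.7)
An embedding of $A$ into $C$ is exactly a $t(A)$-element subset of $\{1,\dots,t(C)\}$ (the columns used, in the natural order) together with, for each $\ell\in\{1,\dots,t(A)\}$, an $a_\ell$-element subset of $\{1,\dots,c\}$ (the rows used in the $\ell$-th chosen column). This yields a canonical identification $\binom{C}{A}\cong\binom{[t(C)]}{t(A)}\times\prod_{\ell=1}^{t(A)}\binom{[c]}{a_\ell}$, turning an $r$-colouring of $\binom{C}{A}$ into an $r$-colouring of a Cartesian product.

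The plan is to apply the product Ramsey theorem to this product. I take $t=t(A)+1$ factors of sizes $p=(t(A),a_1,\dots,a_{t(A)})$ and a target~$n$ at least $\max(t(B),b)$; setting $t(C)=c=PR(r,p,t,n)$ then yields subsets $T\subseteq[t(C)]$ with $|T|\geq t(B)$ and $Y_\ell\subseteq[c]$ with $|Y_\ell|\geq b$ such that $\binom{T}{t(A)}\times\prod_\ell\binom{Y_\ell}{a_\ell}$ is monochromatic.

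From this monochromatic product I will construct a copy of $B$ by taking the first $t(B)$ elements $k_1<\dots<k_{t(B)}$ of $T$ as its columns and choosing $b$-subsets $Z_m\subseteq[c]$ for the rows in each column. An embedding of $A$ into this copy has column subset $\{k_{i_1}<\dots<k_{i_{t(A)}}\}\subseteq T$ (which automatically lies in $\binom{T}{t(A)}$) and row data $V_\ell\in\binom{Z_{i_\ell}}{a_\ell}$; the latter lies in $\binom{Y_\ell}{a_\ell}$ as soon as $Z_{i_\ell}\subseteq Y_\ell$, in which case the embedding falls inside the monochromatic product and thus receives the common colour.

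The crux of the proof, and the step I expect to be the hardest, is choosing the $Z_m$'s so that the required inclusions $Z_{i_\ell}\subseteq Y_\ell$ hold simultaneously for every $t(A)$-subset $\{i_1<\dots<i_{t(A)}\}$ of $[t(B)]$. A single column position $m$ can serve as the $\ell$-th chosen column for every $\ell$ in the interval $[\max(1,m-t(B)+t(A)),\min(m,t(A))]$, so $Z_m$ must sit inside the intersection of several of the $Y_\ell$. The plan is to handle this by taking $n$ sufficiently large (re-applying the product Ramsey theorem if necessary) to extract a common $b$-subset $Z\subseteq\bigcap_\ell Y_\ell$; setting $Z_m=Z$ for all $m$ then makes every embedding of $A$ into the constructed copy of $B$ land inside the monochromatic product, completing the proof.
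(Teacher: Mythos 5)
Your reduction of $\binom{C}{A}$ to the product $\binom{[t(C)]}{t(A)}\times\prod_{\ell=1}^{t(A)}\binom{[c]}{a_\ell}$ and the single application of the product Ramsey theorem are fine, but the step you yourself flag as the crux is a genuine gap, and it cannot be repaired by taking $n$ larger or by ``re-applying'' the product Ramsey theorem. The sets $Y_1,\dots,Y_{t(A)}$ it produces are independent subsets of $[c]$, and for some colourings \emph{every} monochromatic product forces them to be almost disjoint. Concretely, take $t(A)=2$, $a_1=a_2=1$, $t(B)=3$, $b=2$, and colour a copy of $A$ consisting of the points $(x,i_1)$ and $(y,i_2)$ with $i_1<i_2$ according to whether $x\le y$ or $x>y$. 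If a monochromatic product had $|Y_1\cap Y_2|\ge 2$, say $z<w$ both in the intersection, then the copies corresponding to $(z,w)$ and $(w,z)$ would both lie in the product yet receive different colours; hence $|Y_1\cap Y_2|\le 1$ no matter how large $n$ is, and no $b$-subset $Z\subseteq Y_1\cap Y_2$ exists. Your construction of the copy of $B$ (which, because the middle column $m=2$ can serve as either the first or the second chosen column of a copy of $A$, requires $Z_2\subseteq Y_1\cap Y_2$) therefore cannot be completed. The theorem is still true for this colouring --- take the three row sets pairwise increasing --- it is the one-shot product argument that fails.

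The underlying problem is that you are demanding a single colour for all copies of $A$, uniformly over all column choices and all row choices, which is stronger than needed and, as the example shows, stronger than obtainable. The paper instead applies the product Ramsey theorem once for each of the $\binom{t(C)}{t(A)}$ column sets separately, nesting the applications (which is why its $c$ is an iterated $PR$ value), so that at the end the colour of a copy of $A$ depends only on its set of columns but is still allowed to vary with that set. The induced colouring of the $t(A)$-subsets of $\{1,\dots,t(C)\}$ is then handled by the ordinary Ramsey theorem via $t(C)=R(t(A),t(B),r)$, and crucially the surviving row sets $\tilde{C}_j$ may differ from column to column. If you want to keep your product decomposition, you need an analogous outer Ramsey step on the columns combined with per-column-set refinements of the rows; a single monochromatic rectangle $\binom{T}{t(A)}\times\prod_\ell\binom{Y_\ell}{a_\ell}$ will not do.
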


  Observe that, crucially, while the sets~$B$ and~$C$ in this theorem can
  be written as the Cartesian product of two sets of integers, the first
  coordinate of pairs from~$A$ may assume different ranges depending on the
  second coordinate. We will need this property for our result
  concerning permutations.

  \begin{proof}
    Set $t(C):=R(t(A),t(B),r)$ and $\gamma:=\binom{t(C)}{t(A)}$. Moreover, define $c(1):=PR(r,a,t(A),b)$ and 
    $c(i+1):=PR(r,a,t(A),c(i))$ for $i>1$ and let $c:=c(\gamma)$.
    Let 
    \begin{alignat*}{2}
      A&=A_1\cup\dots\cup A_{t(A)} & \quad \text{ with }A_i&=\big\{(x,i) \colon
      x\in\{1,\dots,a_i\}\big\}\text{ for }1\leq i \leq t(A),\\
      B&=B_1\cup\dots\cup B_{t(B)} & \quad \text{ with }B_i&=\big\{(x,i) \colon
      x\in\{1,\dots,b\}\big\}\text{ for }1\leq i \leq t(B),\\
      C&=C_1\cup\dots\cup C_{t(C)} & \quad \text{ with }C_i&=\big\{(x,i) \colon
      x\in\{1,\dots,c\}\big\}\text{ for }1\leq i \leq t(C),
    \end{alignat*}
    and consider a colouring $\chi$ of $\binom{C}{A}$.

    We establish the theorem by finding sets $\tilde{C}_j \subset C_j$ of
    cardinality $b$ for $1 \leq j \leq t(C)$ such that
    $\tilde{C}=\tilde{C}_1\cup\dots\cup\tilde{C}_j$ has the following
    property: For all distinct $i_1,\dots,i_{t(A)} \in \{1,\dots,t(C)\}$
    all copies of~$A$ in $\tilde{C}_{i_1}\cup\dots\cup\tilde{C}_{i_{t(A)}}$
    have the same colour.  This uniquely determines an $r$-colouring of the
    set~$\mathcal{S}$ of all $t(A)$-element subsets of the integers
    $1,\dots,t(C)$. By the choice of $t(C)$ and the finite Ramsey theorem
    we find a monochromatic $t(B)$-element subset
    in~$\mathcal{S}$ 
    and accordingly a monochromatic copy of $B$ in~$C$.

    To find the sets $\tilde{C}_j$ we repeatedly apply the product Ramsey
    theorem.  The idea is to do the following step for each choice of
    distinct $i_1,\dots,i_{t(A)}\in\{1,\dots,t(C)\}$. Consider
    $C^*:=C_{i_1}\cup\dots\cup C_{i_{t(A)}}$ and choose the largest
    monochromatic subset (with respect to $\chi$) of $C^*$. For the next
    step, restrict each set $C_{i_j}$ to $C_{i_j} \cap C^*$.  Then continue
    with the next choice of $i_1,\dots,i_{t(A)}$. By the choice of $c$ and
    the product Ramsey theorem we will get subsets $\tilde{C}_j$ of ${C}_j$
    with $|\tilde{C}_j| \geq b$ at the end of this procedure.
  \end{proof} 

  This immediately implies the desired result.

  \begin{corollary}
    The class of increasing sequences of decreasing sequences and the class of decreasing sequences of increasing
    sequences are Ramsey classes.
  \end{corollary}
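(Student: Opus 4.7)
The plan is to encode a permutation in either of the two classes by its unique block decomposition, interpret this encoding as a subset of $\INTS\times\INTS$ with the second coordinate indexing the blocks, and invoke Theorem~\ref{thm:PR:modified} directly. I focus on the class~$\mathcal K$ of increasing sequences of decreasing sequences; the case of decreasing sequences of increasing sequences follows by reversing~$<_1$.

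A permutation $P\in\mathcal K$ decomposes uniquely into maximal blocks $P_1<_0\dots<_0 P_t$ such that within each~$P_i$ the order~$<_1$ is the reverse of~$<_0$, and between distinct blocks $<_1$ agrees with~$<_0$. I would represent $P$ by the set $\{(x,j):1\le j\le t,\ 1\le x\le|P_j|\}\subset\INTS\times\INTS$, setting $(x,j)<_0(x',j')$ iff $j<j'$ or ($j=j'$ and $x<x'$), and $(x,j)<_1(x',j')$ iff $j<j'$ or ($j=j'$ and $x>x'$). Under this encoding, isomorphism of permutations in~$\mathcal K$ coincides with the notion of isomorphism from Theorem~\ref{thm:PR:modified}: both amount to demanding the same sequence of column sizes.

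The key step, and the one to argue carefully, is that embeddings of permutations in~$\mathcal K$ correspond bijectively to column-size-preserving substructures in the sense of Theorem~\ref{thm:PR:modified}. Two elements in the same block form a decreasing pair (their $<_0$ and $<_1$ orderings disagree), and any embedding must map them to another decreasing pair, which by the block structure must lie in a single block of the target. Conversely, two elements in distinct blocks form an increasing pair and so must land in distinct blocks, in matching order. Consequently an embedding picks an order-preserving choice of blocks of the target and, within each picked block, a subset of the prescribed size; this yields the desired bijection between $\binom{B}{A}$ and the substructures of~$B$ isomorphic to~$A$ in the sense of Theorem~\ref{thm:PR:modified}.

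With this bijection, the Ramsey property is immediate. Given $A,B\in\mathcal K$ and $r\in\INTS$ with block sizes $a_1,\dots,a_{t(A)}$ and $b_1,\dots,b_{t(B)}$, I would set $b:=\max_j b_j$ and apply Theorem~\ref{thm:PR:modified} with inputs $r,t(A),t(B),a,b$ to obtain $t(C)$ and~$c$. Interpret the resulting $C$ as the permutation in~$\mathcal K$ with $t(C)$ blocks of size~$c$; this indeed lies in~$\mathcal K$ by construction. The theorem produces a monochromatic copy of the padded permutation~$B^+\in\mathcal K$ with $t(B)$ blocks of size~$b$; since $B$ embeds into~$B^+$, a monochromatic copy of~$B$ appears inside. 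The main obstacle is the correspondence between permutation embeddings and block-respecting substructures in the encoding; once that is verified, Theorem~\ref{thm:PR:modified} carries out the combinatorial work.
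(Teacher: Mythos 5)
Your proposal is correct and follows essentially the same route as the paper: identify a permutation in the class with its sequence of maximal block sizes, pad $B$ to blocks of uniform size $b=\max_j b_j$, and apply Theorem~\ref{thm:PR:modified} with $C$ taken to be the increasing sequence of $t(C)$ decreasing sequences of length~$c$. The only difference is that you spell out the (correct, and in the paper implicit) verification that subpermutation copies correspond exactly to the column-size-preserving substructures of Theorem~\ref{thm:PR:modified}.
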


  \begin{proof}
    Let $A$, $B$ be two increasing sequences of decreasing sequences. Set
    $t(A)$ to be the number of decreasing sequences in~$A$ and let
    $a_j$ be the length of the $j$th decreasing sequence. Furthermore
    set $t(B)$ to be the number of decreasing sequences in~$B$ and let
    $b$ be the maximum length of a decreasing sequence in~$B$. Apply
    Theorem~\ref{thm:PR:modified} and let $C$ be the increasing sequence
    of $t(C)$ decreasing sequences of length~$c$. The proof for decreasing
    sequences of increasing sequences is analogous.
  \end{proof}

\section{The class of all permutations}
\label{sec:universal}

 To show that the class of all permutations is a Ramsey class we use the
  amalgamation technique, which was introduced by Ne\v{s}et\v{r}il and
  R\"odl~\cite{NesRod:Simple,NesRod:Two-proofs} (see also
  Ne\v{s}et\v{r}il~\cite{NesetrilHandbookChapter}).  This technique
  consists of two main parts: the \emph{partite lemma} and the
  \emph{partite construction}.  We start with some definitions.

  Let $a\in\INTS$. An \emph{$a$-partite} permutation $P=(X_1 \cup\dots\cup
  X_a, <_0, <_1)$ is a permutation on the union of disjoint sets
  $X_1,\dots,X_a$ such that $x <_0 y$ for $x \in X_i$ and $y\in X_j$
  whenever $i<j$, and $x <_0 y$ iff $x <_1 y$ for $x,y\in X_i$ (an example
  is provided in Figure~\ref{fig:partitePermutation}). The sets $X_i$ are
  called the \emph{parts} of $P$.  An $a$-partite permutation is
  \emph{transversal} if each part is of size one.  As subpermutations of
  $P$ we now consider only $a$-partite subpermutations $P'=(X'_1
  \cup\dots\cup X'_a, <'_0, <'_1)$ (where the $X'_i$ are possibly
  empty). The \emph{trace} of an $a$-partite permutation $P=(X_1
  \cup\dots\cup X_a, <_0, <_1)$ is given by $\{i : X_i\neq\emptyset\}$.

  \begin{figure}[ht]
     \psfrag{1}{$1$} 
     \psfrag{2}{$2$} 
     \psfrag{3}{$3$} 
     \psfrag{4}{$4$}
     \psfrag{5}{$5$}
     \psfrag{6}{$6$}
     \psfrag{7}{$7$} 
     \psfrag{8}{$8$} 
     \psfrag{9}{$9$} 
     \psfrag{10}{$10$}
     \psfrag{11}{$11$}
     \psfrag{12}{$12$}
     \psfrag{13}{$13$}
   \begin{center}
     \includegraphics[scale=.8]{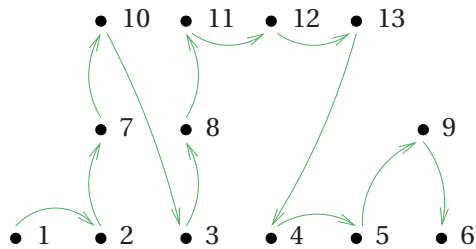}
   \end{center}
  \caption{The $3$-partite permutation with pattern $1,2,7,10,3,8,11,12,13,4,5,9,6$.}
  \label{fig:partitePermutation}
  \end{figure}


\subsection{The Partite Lemma}

 Following the strategy of Ne\v{s}et\v{r}il and R\"odl we first formulate a
  Ramsey lemma concerning copies of $a$-partite transversals in arbitrary
  $a$-partite permutations.  In our case this lemma asserts the following.

  \begin{lemma}[partite lemma]
  \label{lem:part}
    Let $A$ be an $a$-partite transversal and $B$ an arbitrary $a$-partite permutation. Then for any $r\in\INTS$
    there exists an $a$-partite permutation $C$ such that $C \rightarrow (B)^A_r$.
  \end{lemma}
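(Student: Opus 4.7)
Write $\pi \in S_a$ for the $<_1$-pattern of the transversal $A$, $b_i$ for the size of the $i$-th part of $B$, and $w_B$ for the $<_1$-pattern word of $B$ (a word over $\{1,\dots,a\}$ in which letter $i$ appears $b_i$ times).  Because $A$ is a transversal, a copy of $A$ in any $a$-partite permutation $D$ is nothing more than a choice of one point from each part of $D$ whose induced $<_1$-pattern is $\pi$.  This is the structural observation I would exploit.

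The plan is to construct $C$ as a blow-up of a carefully chosen auxiliary $a$-partite permutation $\tilde B$.  Concretely, replace every point $p$ of $\tilde B$ by a block $X_p$ of $N$ points lying in the same part of $C$, with both orders inherited from $\tilde B$: within each part the blocks are $<_0$-contiguous in the order of $\tilde B$, and globally the blocks are $<_1$-contiguous at the $<_1$-position of $p$ in $\tilde B$ (so $<_0$ and $<_1$ agree within each block).  Under this construction, every copy of $A$ in $C$ is uniquely described by a pair $(\alpha,\tau)$ where $\alpha\in\binom{\tilde B}{A}$ is a copy of $A$ in $\tilde B$ and $\tau$ chooses one element from each of the $a$ blocks blowing up the elements of $\alpha$: the $<_1$-pattern of $\tau$ in $C$ equals the $<_1$-pattern of $\alpha$ in $\tilde B$, since the blocks are $<_1$-contiguous.

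Given an $r$-colouring $\chi$ of $\binom{C}{A}$, I would enumerate $\binom{\tilde B}{A}=\{\alpha_1,\dots,\alpha_M\}$ and process the copies in turn.  At step $s$ the $a$-fold product of the current (possibly already shrunk) blocks blowing up $\alpha_s$ carries the colouring induced from $\chi$; by the product Ramsey theorem, shrink each of these $a$ blocks to a subset on which this product colouring is constant, say with value $c(\alpha_s)\in\{1,\dots,r\}$.  Choosing $N$ at the top of a tower of product Ramsey numbers of height $M$ ensures every block still contains at least one element after all $M$ steps, and we are left with an induced $r$-colouring $c\colon\binom{\tilde B}{A}\to\{1,\dots,r\}$.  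A copy of $B$ in $\tilde B$ on which $c$ is constant then lifts, by choosing one element from each remaining block blowing up one of its elements, to a $\chi$-monochromatic copy of $B$ in $C$, as required.

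The main obstacle is therefore the construction of $\tilde B$ itself with $\tilde B\to(B)^A_r$, which is the same Ramsey statement at the unblown-up level.  I would take $\tilde B$ to be the $a$-partite permutation whose $<_1$-pattern word is a sufficiently long concatenation of copies of $w_B$, with $<_0$-orders on each part chosen consistently, and establish the Ramsey property by a finite Ramsey / pigeonhole argument on the indices of the concatenated copies.  The key simplification is that, thanks to $A$ being a transversal, a copy of $A$ in $\tilde B$ is just a choice of $a$ positions of the pattern word whose letters read $\pi_1,\dots,\pi_a$; the delicate point is to control the copies of $A$ that cross from one concatenated copy of $w_B$ to another, which I expect to be handled by finite Ramsey applied to the indices of the concatenated copies and refined by a second pass of the product Ramsey theorem if necessary.
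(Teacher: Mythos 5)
Your structural observations are sound: because $A$ is a transversal, a copy of $A$ in an $a$-partite permutation is just a choice of one point per part with $<_1$-pattern $\pi$, and your blow-up $C$ of $\tilde B$ together with the iterated product Ramsey theorem does force the colour of a copy of $A$ in $C$ to depend only on the underlying copy $\alpha\in\binom{\tilde B}{A}$. The difficulty is that this reduction is circular. It converts ``there exists an $a$-partite $C$ with $C\to(B)^A_r$'' into ``there exists an $a$-partite $\tilde B$ with $\tilde B\to(B)^A_r$'', which is word for word the lemma you are trying to prove; the blow-up makes no progress, and all of the content sits in what you call the ``main obstacle''. There the proposal stops being a proof: applying the finite Ramsey theorem to the \emph{indices} of the concatenated copies of $w_B$ cannot by itself determine the colour of a copy of $A$, since that colour also depends on which points inside each concatenated block are chosen, and in particular on the copies of $A$ several of whose points sit at the same ``height'' in the $<_1$-order. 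These are exactly the crossing copies you flag as delicate, and ``a second pass of the product Ramsey theorem if necessary'' is a hope, not an argument.

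For comparison, the paper first observes that every $a$-partite permutation embeds into an $a$-snake (the concatenation of copies of the word $12\cdots a$), reducing the lemma to the case where $B$ and $C$ are snakes, and then proves that case by induction on $a$: it peels off the first part, applies the induction hypothesis $(r-1)\xi(B)+1$ times so that the colour of a copy of $A$ comes to depend only on its point in the first part, and finishes with the pigeonhole principle. If you want to rescue your concatenation idea, the right move is to apply the finite Ramsey theorem not to the indices of the concatenated blocks but to the $a$-element sets of $<_1$-positions: a copy of $A$ whose points lie at pairwise distinct heights is uniquely determined by its $a$-set of heights (the pattern $\pi$ dictates which part receives which height), so one can colour $a$-sets of heights accordingly, take a homogeneous set, and select inside it a sub-snake whose points occupy strictly interleaved, pairwise distinct heights; that sub-snake contains $B$ and contains no tied copies of $A$, so it is monochromatic. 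That closes the gap in one step, but it is not what your proposal says.
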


  For the proof of this lemma we use the following class of special
  permutations.  An $a$-\emph{snake} $P=(X_1 \cup\dots\cup X_a, <_0, <_1)$
  of \emph{length} $\xi=\xi(P)$ is an $a$-partite permutation with parts
  \begin{equation*}
    X_i=\big\{ (i-1)\xi+x \colon 1 \leq x \leq \xi \big\} =: \{1_i,\dots,\xi_i\}
  \end{equation*}
  such that $x_i <_1 x_j $ for $0\leq i<j\leq a$ and $1 \leq x \leq \xi$,
  and $x_a <_1 y_1$ for $y=x+1$ and $1 \leq x < \xi$ (see
  Figure~\ref{fig:snake}).

  \begin{figure}[ht]
     \psfrag{1}{$1$} 
     \psfrag{2}{$2$} 
     \psfrag{3}{$3$} 
     \psfrag{4}{$4$}
     \psfrag{5}{$5$}
     \psfrag{6}{$6$}
     \psfrag{7}{$7$} 
     \psfrag{8}{$8$} 
     \psfrag{9}{$9$} 
     \psfrag{10}{$10$}
     \psfrag{11}{$11$}
     \psfrag{12}{$12$}
   \begin{center}
     \includegraphics[scale=.8]{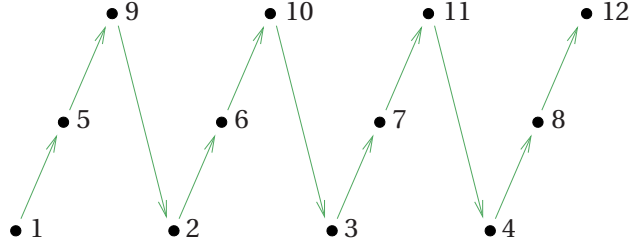}
   \end{center}
  \caption{The $3$-partite snake of length $4$.}
  \label{fig:snake}
  \end{figure}

  \begin{fact}
    Each $a$-partite permutation $P=(X_1 \cup\dots\cup X_a, <_0, <_1)$ is a subpermutation of
    the $a$-snake of length $\sum_i |X_i|$.
  \end{fact}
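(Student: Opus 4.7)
The plan is to define an explicit embedding from $P$ into the $a$-snake $S$ of length $\xi=\sum_i|X_i|$. First I would observe that the $<_1$ order of $S$ is precisely the lexicographic order on the labels $k_i$ where the ``column index'' $k\in\{1,\dots,\xi\}$ is primary and the ``row index'' $i\in\{1,\dots,a\}$ is secondary: indeed, by definition of the snake, $k_i<_1 k_{i+1}$ for all $i<a$, and $k_a<_1(k+1)_1$.

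Next I would enumerate the elements of $P$ according to $<_1$, say $p_1<_1 p_2<_1\cdots<_1 p_\xi$, and for each $k$ let $i_k$ denote the unique index with $p_k\in X_{i_k}$. Define $f(p_k):=k_{i_k}\in S$; note $f$ is injective since distinct $k$'s land in distinct columns of the snake.

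To see that $f$ is an embedding, I would check the two orders separately. For the natural order: if $p_k\in X_i$ and $p_l\in X_j$ with $i<j$, then $f(p_k)=k_i<_0 l_j=f(p_l)$ by the partite definition of $<_0$ in $S$. If they lie in the same part $X_i$, then $<_0$ and $<_1$ coincide on $X_i$ in both $P$ and $S$, so $p_k<_0 p_l$ (equivalently $k<l$) implies $f(p_k)=k_i<_0 l_i=f(p_l)$. For $<_1$: by the lexicographic description of $<_1$ on $S$ noted above, $k_{i_k}<_1 l_{i_l}$ iff either $k<l$, or $k=l$ and $i_k<i_l$; since distinct indices $k\ne l$ correspond to distinct elements $p_k\ne p_l$, the condition $k<l$ holds iff $p_k<_1 p_l$, which is exactly what we want.

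There is no genuine obstacle here; the only subtlety is to choose the mapping that respects the snake's ``one element per column'' structure, which the $<_1$-enumeration of $P$ supplies automatically. This establishes the fact.
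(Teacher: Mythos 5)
Your proof is correct, and it supplies exactly the explicit embedding that the paper leaves implicit (the Fact is stated without proof there): enumerate the elements of $P$ by $<_1$ and send the $k$th one to column $k$ of the snake, in the row matching its part. All the checks — injectivity, preservation of parts, of $<_0$ (split into the cross-part and same-part cases), and of $<_1$ via the column-primary lexicographic description of the snake's second order — are the right ones and go through.
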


  For an arbitrary $a$-partite permutation $P$ let $\xi(P)$ be the length of the shortest $a$-snake
  that contains $P$ as a subpermutation.
  By the previous fact the partite lemma, Lemma~\ref{lem:part}, is a direct consequence of the following lemma.

  \begin{lemma}
    Let $A$ be an $a$-partite transversal and $B$ an arbitrary $a$-snake. Then for any $r\in\INTS$
    there exists an $a$-snake $C$ such that $C \rightarrow (B)^A_r$.   
  \end{lemma}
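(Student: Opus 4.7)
The plan is to reduce the lemma to a single application of the finite Ramsey theorem by exploiting that the $<_1$-order on an $a$-snake is precisely the lexicographic order on the (column, part-index) pairs. Writing $\beta := \xi(B)$ and $(\pi_1,\dots,\pi_a)$ for the pattern of~$A$, I would take $C$ to be the $a$-snake of length $\gamma := R(a, a\beta, r)$.

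The key observation is that every $a$-element subset $\{y_1 < \dots < y_a\}$ of the column set $\{1,\dots,\gamma\}$ determines a unique copy of~$A$ in~$C$ with pairwise distinct column values, namely the one in which column $y_k$ is assigned to part~$\pi_k$. Because the $y_k$ are distinct, the induced lex-order on the resulting (column, part-index) pairs automatically realises the pattern $(\pi_1,\dots,\pi_a)$, so this yields an injection $\iota \colon \binom{\{1,\dots,\gamma\}}{a} \to \binom{C}{A}$ whose image consists of the ``distinct-column'' copies of~$A$. Given an $r$-colouring of $\binom{C}{A}$, I would pull it back along $\iota$ and invoke the finite Ramsey theorem to obtain a set $T = \{t_1 < \dots < t_{a\beta}\}$ all of whose $a$-subsets share a single colour.

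Using~$T$, I would then construct a candidate copy $B'$ of $B$ inside $C$ by placing the $k$-th element of part~$i$ at column $t_{(k-1)a + i}$, for $1 \le k \le \beta$ and $1 \le i \le a$. The strict interleaving $t_{(k-1)a+1} < \dots < t_{(k-1)a+a} < t_{ka+1}$ of the assigned columns shows that $B'$ is an $a$-snake of length~$\beta$, hence isomorphic to~$B$. Any $A$-copy internal to~$B'$ uses columns $t_{(k_i-1)a + i}$, one per part~$i$; these subscripts have pairwise distinct residues modulo~$a$, so the $a$ columns used are distinct and the copy equals $\iota(S)$ for some $a$-subset $S \subseteq T$. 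Monochromaticity of~$T$ then forces every $A$-copy in~$B'$ to share one colour, so $B'$ is monochromatic.

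The main conceptual obstacle is to realise that the ``tie-ful'' copies of~$A$ in~$C$ (those in which several parts share a column value) can be ignored entirely: the construction of $B'$ from $a\beta$ strictly increasing columns forces every $A$-copy internal to~$B'$ to be of distinct-column type, so only this type needs Ramsey control. Without this observation one would have to handle several tie-structure types in parallel, whose colours need not agree and could therefore obstruct the existence of a single monochromatic $B$-copy.
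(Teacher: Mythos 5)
Your proof is correct, but it takes a genuinely different route from the paper. The paper proves the lemma by induction on the number of parts~$a$: it builds~$C$ as a snake whose length is an $\bigl((r-1)\xi(B)+1\bigr)$-fold nested iteration of the $(a-1)$-partite bound, peels off the part containing the first element of~$A$, repeatedly applies the induction hypothesis to make colours depend only on that first element, and finishes with the pigeonhole principle. You instead exploit the specific structure of snakes --- that $<_1$ on an $a$-snake is exactly the lexicographic order on (column, part) pairs --- to reduce the whole statement to a single application of the finite Ramsey theorem with $C$ the snake of length $R(a,a\xi(B),r)$. The crux of your argument, correctly identified and correctly handled, is that copies of~$A$ in which two parts share a column value need no Ramsey control at all, because the diagonal placement of~$B'$ on columns $t_{(k-1)a+i}$ (whose indices are pairwise distinct modulo~$a$) forces every $A$-copy inside~$B'$ to use $a$ distinct columns and hence to lie in the image of your injection~$\iota$; I checked that $B'$ is indeed isomorphic to the snake of length $\xi(B)$ under both orders. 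Your approach buys a much better (single-exponential rather than tower-type) bound and avoids the induction entirely; the paper's iterative argument is closer to the general Ne\v{s}et\v{r}il--R\"odl partite-lemma template and does not depend on the ambient $<_1$-order being so rigidly determined by the column structure, which is why it is the one that generalises to other amalgamation classes.
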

  \begin{proof}
    We proceed by induction on $a$. The base case $a=1$ is trivial. 
    For the induction step let $\xi(r,A',B')$ be the length of a
    snake $C$ such that the lemma holds for $r$~colours and
    $(a-1)$-partite $A'$ and $B'$. Now, consider an $a$-partite
    transversal $A$ and an $a$-snake $B$ and assume that the first element
    of $A$ is in
    part $\ground{A}_1$. The $(a-1)$-partite permutation induced on $\ground{A}\setminus\ground{A}_1$
    is denoted by $A^*$ (and the trivial $1$-partite permutation induced
    on~$\ground{A}_1$ by~$A_1$).
    Let $C$ be the $a$-partite snake of length
    \begin{equation*}
      \underset{(r-1)\xi(B)+1\,\mbox{times}}{
        \underbrace{
          \xi(r,A^*,\xi(r,A^*,\xi( \dots (r,A^*,\xi(
        }
      }
      r,A^*, A^*)+1) \dots )+1)+1)
    . 
    \end{equation*}
    We claim that $C$ has the desired properties. Indeed, consider a
    colouring $\chi$ of $\binom{C}{A}$.  Let $\ground{C}_1$ be the part of
    $C$ that corresponds to $\ground{A}_1$, let~$C_1$ be the $1$-partite
    permutation induced on~$\ground{C}_1$, and $C^*$ the $(a-1)$-partite
    permutation induced on $\ground{C}\setminus\ground{C}_1$.  Choose the
    first element $c_1$ of~$C_1$ and consider copies of $A$ in $C$ that
    start in $c_1$.  These copies of $A$ are in one to one correspondence
    with copies of $A^*$ in $C^*$. Let $\chi^*$ be the corresponding
    colouring of $\binom{C^*}{A^*}$, i.e., $\chi^*(A^*):=\chi({A^* \cup
    c_1})$. By the choice of $C$ and the induction hypothesis we find an
    $(a-1)$-snake in~$C^*$ that is monochromatic under~$\chi^*$ and has
    length
    \begin{equation*}
      \underset{(r-1)\xi(B)\,\mbox{times}}{
        \underbrace{
          \xi(r,A^*,\xi(r,A^*,\xi( \dots (r,A^*,\xi(
        }
      }
      r,A^*, A^* )+1) \dots )+1)+1) \,.
    \end{equation*}
    We continue by restricting $C$ to the elements of this snake and
    appropriate intermediate elements in $C_1$ and repeat this process
    $(r-1)\xi(B)+1$ times. Let $\tilde{C}$ be the $a$-partite snake that
    has in part $\ground{C}_1$ all the elements that were chosen as
    $c_1$ in this process. By construction,
    $\xi(\tilde{C})=(r-1)\xi(B)+1$. Observe, moreover, that the colour of a
    copy of $A$ in $\tilde{C}$ depends only on its element in $A_1$. By the
    pigeon hole principle we therefore get a monochromatic copy of $B$ in
    $\tilde{C}$.
  \end{proof}

\subsection{The Partite Construction}

  For the partite construction we additionally need the following lemma.

  \begin{lemma}\label{lem:perm:partiteAmalgam}
    For each $a\in\INTS$ the class of $a$-partite permutations has the amalgamation property. 
  \end{lemma}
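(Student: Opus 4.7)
The plan is a ``free'' amalgamation: glue $B$ and $B'$ along $A$ and equip the result with the natural $a$-partite structure. I would assume WLOG that $f$ and $f'$ are inclusions and that $\ground{B}\cap\ground{B'}=\ground{A}$, and write $Y_i$, $Y'_i$ for the $i$-th parts of $B$ and $B'$. Since both embeddings preserve parts, a short check shows $Y_i\cap Y'_j=\emptyset$ for $i\neq j$, while $Y_i\cap Y'_i$ coincides with the $i$-th part of $A$. I would then take $C$ with ground set $\ground{C}:=\ground{B}\cup\ground{B'}$ and parts $Z_i:=Y_i\cup Y'_i$.

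The heart of the argument is choosing the two linear orders on $\ground{C}$. First, I would amalgamate $<_1^B$ and $<_1^{B'}$ into a linear order $<_1^C$ on $\ground{C}$; this is possible because the class of finite linear orders has the amalgamation property and $<_1^B$, $<_1^{B'}$ agree on $\ground{A}$. Next, I would define $<_0^C$ by placing the parts in the order $Z_1,Z_2,\dots,Z_a$ and, within each $Z_i$, taking the restriction of $<_1^C$. By construction $<_0^C=<_1^C$ on each $Z_i$, so $C$ is an $a$-partite permutation. To confirm the amalgamation, one verifies that the inclusions $g\colon B\hookrightarrow C$ and $g'\colon B'\hookrightarrow C$ are embeddings: the $<_1$-condition is immediate from the extension property; within a single part the $<_0$-condition follows from $<_0^B=<_1^B$ on $Y_i$ and $<_0^C=<_1^C$ on $Z_i$; between parts, both $<_0^B$ and $<_0^C$ are determined solely by part indices. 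Commutativity $g\circ f=g'\circ f'$ is automatic since both composites are the inclusion $\ground{A}\hookrightarrow\ground{C}$.

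The only nontrivial ingredient is the classical amalgamation of two finite linear orders agreeing on a common subset; if one wants a concrete realisation, it is enough to compare, for each new element of $\ground{B}\setminus\ground{A}$ or $\ground{B'}\setminus\ground{A}$, the Dedekind cut it induces on $(\ground{A},<_1^A)$, and break any remaining ties arbitrarily. I do not anticipate a genuine obstacle here: once the linear-order amalgamation is in hand, the rest is routine bookkeeping to check the $a$-partite coherence conditions and the part-preservation of the two inclusions.
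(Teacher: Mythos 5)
Your proof is correct and takes essentially the same route as the paper: a strong (free) amalgamation of the underlying permutations, arranged so that the two linear orders coincide within each part. Your version is in fact more explicit than the paper's one-line argument --- defining $<_0^C$ between parts by part indices and within each part as the restriction of the amalgamated $<_1^C$ is precisely how one realises the paper's requirement that ``within one part the two orderings coincide'', and the remaining verifications you list are exactly the routine ones.
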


  \begin{proof}
  It is easy to see that the class of all permutations
  has strong amalgamation (recall the definition from
  Section~\ref{sec:background}). Hence, to amalgamate $a$-partite
  permutations, amalgamate
  the underlying permutations strongly in such a way that within one part
  the two orderings coincide. Membership in parts is preserved.
  \end{proof}

  With this we are now ready to deduce from the partite lemma via the
  partite construction that the class of all permutations is Ramsey.

  \begin{theorem}
    The class of all permutations is a Ramsey class.
  \end{theorem}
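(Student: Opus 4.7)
The plan is to apply the Nešetřil--Rödl partite construction, using the partite lemma (Lemma~\ref{lem:part}) as the atomic Ramsey step and Lemma~\ref{lem:perm:partiteAmalgam} to glue intermediate structures. Given permutations $A$ and $B$ and a number of colours $r\in\INTS$, set $a=|\ground{A}|$ and $b=|\ground{B}|$, and apply the finite Ramsey theorem to choose $n$ with $n\rightarrow(b)^a_r$.

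First I would build an $n$-partite permutation $P_0$ such that, for every $b$-subset $L=\{l_1<\dots<l_b\}$ of $\{1,\dots,n\}$, a distinguished transversal copy $B^L$ of $B$ sits inside $P_0$, with the $j$-th point of $B$ (in its natural order) placed in the part indexed by $l_j$; iteratively amalgamating these copies via Lemma~\ref{lem:perm:partiteAmalgam} produces such a $P_0$. Next I would enumerate the $a$-subsets of $\{1,\dots,n\}$ as $K_1,\dots,K_\tau$ and inductively construct $n$-partite permutations $P_1,\dots,P_\tau$. Given $P_{i-1}$, write $P_{i-1}^*$ for the $a$-partite permutation obtained by restricting $P_{i-1}$ to the parts indexed by $K_i$, and let $Q_i$ be an $a$-partite permutation with $Q_i\rightarrow(P_{i-1}^*)^A_r$ supplied by Lemma~\ref{lem:part}. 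Form $P_i$ by placing $Q_i$ on the parts indexed by $K_i$ and, for every transversal copy of $P_{i-1}^*$ inside $Q_i$, gluing an amalgamated copy of $P_{i-1}$ that agrees with this transversal on the $K_i$-parts; the amalgamation uses Lemma~\ref{lem:perm:partiteAmalgam}. Set $C:=P_\tau$, viewed as an ordinary permutation by forgetting the partition.

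To verify $C\rightarrow(B)^A_r$, take any $r$-colouring of the copies of $A$ in $C$ and work backwards from $i=\tau$ down to $i=1$. At stage $i$ the Ramsey arrow for $Q_i$ locates a transversal copy of $P_{i-1}^*$ inside the $K_i$-parts of the currently retained $P_i$ whose transversal $A$-copies are monochromatic, and I restrict attention to the copy of $P_{i-1}$ glued over it. After $\tau$ stages we reach a copy of $P_0$ inside $C$ such that, for every $i$, the transversal $A$-copies on the parts $K_i$ share a common colour $c_i$. The map $K_i\mapsto c_i$ is an $r$-colouring of the $a$-subsets of $\{1,\dots,n\}$, and the choice $n\rightarrow(b)^a_r$ yields a $b$-subset $L$ all of whose $a$-subsets carry one colour; the distinguished transversal copy $B^L\subset P_0\subset C$ is then monochromatic.

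The main obstacle I anticipate is that the partite lemma controls only \emph{transversal} copies of $A$ (one point per part), whereas the target arrow concerns all copies of $A$ in $C$ viewed as an ordinary permutation, including those hitting some part more than once. Since $<_0$ and $<_1$ agree within each part of any $a$-partite permutation, a non-transversal $a$-subset necessarily induces a pattern containing an identity subpermutation on the points sharing a part. My plan is therefore to choose $P_0$ and the $Q_i$ carefully enough that this structural restriction either rules out non-transversal realisations of $A$ altogether or makes them harmless (for instance, by an initial colour-normalisation on the bounded-many possible ``shapes'' of a non-transversal $A$-copy). Checking that this transversality condition is preserved by every amalgamation step and survives to the final $C$ is the most delicate bookkeeping in the argument.
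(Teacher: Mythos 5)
Your proposal is essentially the paper's proof: the same partite construction of $P_0,P_1,\dots$, the partite lemma applied in turn to each $a$-subset of parts, gluing via the amalgamation property of $a$-partite permutations, a backward induction selecting nested copies, and a final application of the finite Ramsey theorem to the induced colouring of traces. The obstacle you flag in your last paragraph is not actually an obstacle and needs no further bookkeeping: the monochromatic copy of $B$ you ultimately extract is the distinguished transversal copy $B^L$, which has exactly one point in each part indexed by $L$, so every copy of $A$ inside it meets $a$ distinct parts and is therefore itself transversal. Non-transversal copies of $A$ elsewhere in $C$ may be coloured arbitrarily --- they simply never occur inside $B^L$ --- so the arrow $C\rightarrow(B)^A_r$ follows from controlling transversal copies alone, which is exactly how the paper's argument concludes.
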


  \begin{proof}
    Let $r\ge 2$ be an integer and $A$ and $B$ be arbitrary permutations of
    sizes $a$ and $b$, respectively. We need to show that there is a
    permutation~$C$ with $C\rightarrow(B)^A_r$.  For this purpose let
    $q=R(a,b,r)$ and $\gamma=\binom{q}{a}$.  In the partite construction we
    will recursively define permutations $P_0$, \dots, $P_{\gamma}$ and
    $P_\gamma$ will be the permutation~$C$ in quest.

    We consider $A$ as an $a$-partite permutation and $B$ as a $b$-partite
    permutation. Let $P_0$ be a $q$-partite permutation such that any
    arbitrarily chosen $b$ parts of $P_0$ induce a copy of~$B$. Such a
    permutation can easily be constructed, for instance, by concatenating
    $\binom{q}{b}$ copies of~$B$ `spread' over the right parts.

    Let $\binom{\{1,\dotsc,q\}}{a}=\{M_1,\dotsc,M_{\gamma}\}$.  For
    $i\geq1$, each permutation $P_{i}$ is then obtained from $P_{i-1}$ by
    performing the following construction: Let $D_i$ be the permutation
    induced on the parts enumerated by~$M_i$. According to the partite lemma,
    Lemma~\ref{lem:part}, there is a $b$-partite permutation $E_i$ such
    that $E_i \rightarrow (D_i)^A_r$.  Construct $P_{i}$ by taking
    $\left|\binom{E_i}{D_i}\right|$ copies of $P_{i-1}$ and amalgamating
    them along the copies of $D_i$ in $E_i$. This is possible by
    Lemma~\ref{lem:perm:partiteAmalgam}.

  Then $P_{\gamma}\rightarrow(B)^A_r$. This is shown by backward
  induction: Consider a colouring $\chi$ of $P_\gamma$ and let
  $P^*_\gamma=P_\gamma$.  From $P^*_{i+1}$ choose a copy $P^*_i$ of $P_i$
  such that all copies of $A$ in the corresponding copy of $D_i$ have
  the same colour. $P^*_i$~exists by construction (because $E_i \rightarrow
  (D_i)^A_r$). 
  Eventually, we obtain a copy $P^*_0$ of~$P_0$ with the following
  property: The colour of a copy of~$A$ in~$P^*_0$ depends only on its
  trace. Accordingly the colouring $\chi$ of $P^*_0$ induces a colouring of
  subsets of size $a$ of the first $q$ positive integers.  By the choice
  of~$q$ and the Ramsey theorem we therefore find a monochromatic copy
  of~$B$ in this copy of~$P_0$.  \end{proof}

\section{Concluding remarks}

We have recently been informed that some of our results have independently
been obtained by Sokić~\cite{Sok:Ramsey,Sok:Ramsey-property}.
An alternative proof of our results, using topological properties
of automorphism groups, can be obtained as an application
of Bodirsky's recent manuscript~\cite{Bod:New-Ramsey-Classes}.

The class of all structures consisting of 3 or more total orders
is also a Ramsey class; this can be proved either by a modification
of our proof or by the cross-construction of Sokić~\cite{Sok:Ramsey}.
However, at present the classification of all homogeneous structures
with $k$ total orders is open for $k\ge3$.

The partite construction seems to be a promising method for proving
Ramsey properties of more general amalgamation classes. The
applications we have in mind include, e.g., classes of structures
defined by forbidding the existence of a homomorphism from a set
of finite connected structures. Such classes were studied by Cherlin,
Shelah and Shi~\cite{CheSheShi:Universal}. For finite sets of
forbidden structures, the solution was announced by
Nešetřil~\cite{Nes:CSS-Ramsey}. 

\section*{Acknowledgements}

This research was in part done during the 2005 Prague Doccourse `Modern
Methods in Ramsey Theory', supported by the EU Research Network COMBSTRU
and by DIMATIA. We would like to thank the organisers for an instructive
and inspiring event, and the funding bodies for financial support. In
particular we would like to thank Jarik Nešetřil for suggesting the problem
to work on and for intense discussions.

The first author was financially supported by CNPq (Proc.~484154/2010-9) and
by 
FAPESP (Proc.~2009/17831-7), and is grateful to NUMEC/USP, Núcleo de
Modelagem Estocástica e Complexidade of the University of São Paulo, for
supporting this research.

The second author was supported by ERC Starting Grant `CSP-Complexity'
no.~257039 and by the Institute for Theoretical Computer Science
(ITI), project 1M0545 of the Ministry of Education of the Czech
Republic.

\end{document}